\newtheorem{thm}{Theorem}[section]
\newtheorem{lem}[thm]{Lemma}
\newtheorem{prop}[thm]{Proposition}
\theoremstyle{definition}
\newtheorem{defn}[thm]{Definition}
\theoremstyle{remark}
\numberwithin{equation}{section}
\newcommand{\norm}[1]{\left\Vert#1\right\Vert}
\newcommand{\abs}[1]{\left\vert#1\right\vert}
\newcommand{\set}[1]{\left\{#1\right\}}
\newcommand{\Real}{\mathbb R}
\newcommand{\To}{\longrightarrow}
\newcommand{\nat}{\mathbb{N}}
\begin{document}

\setcounter{tocdepth}{1}


\title[On the Banach lattice $c_0$]{On the Banach lattice $c_0$}
\author[A.\ Avil\'es]{Antonio Avil\'es}
\address{Universidad de Murcia, Departamento de Matem\'{a}ticas, Campus de Espinardo 30100 Murcia, Spain.}
\email{avileslo@um.es}

\author[G. Mart\'inez-Cervantes]{Gonzalo Mart\'inez-Cervantes}
\address{Universidad de Murcia, Departamento de Matem\'{a}ticas, Campus de Espinardo 30100 Murcia, Spain.}
\email{gonzalo.martinez2@um.es}

\author[J.D. Rodr\'iguez Abell\'an]{Jos\'e David Rodr\'iguez Abell\'an}
\address{Universidad de Murcia, Departamento de Matem\'{a}ticas, Campus de Espinardo 30100 Murcia, Spain.}
\email{josedavid.rodriguez@um.es}

\thanks{Authors supported by project MTM2017-86182-P (Government of Spain, AEI/FEDER, EU) and project 20797/PI/18 by Fundaci\'{o}n S\'{e}neca, ACyT Regi\'{o}n de Murcia. Third author supported by FPI contract of Fundaci\'on S\'eneca, ACyT Regi\'{o}n de Murcia.}

\keywords{$c_0$; $FBL[c_0]$; Banach lattice; Free Banach lattice; Projectivity}

\subjclass[2010]{46B43, 06BXX}

\begin{abstract}

We show that $c_0$ is not a projective Banach lattice, answering a question of B. de Pagter and A. Wickstead. On the other hand, we show that $c_0$ is complemented in the free Banach lattice generated by itself (seen as a Banach space). As a consequence, the free Banach lattice generated by $c_0$ is not projective.

\end{abstract}

\maketitle

\setlength{\parskip}{4mm}

\section{Introduction}

The purpose of this paper is to answer negatively Question 12.11 proposed by B. de Pagter and A. Wickstead in \cite{dPW15} (notice that this also answers negatively \cite[Question 12.10]{dPW15}). We prove that $c_0$, seen as a Banach lattice, is not projective. Moreover, we show that it can be (isometrically) embedded as a Banach lattice into the free Banach lattice generated by itself seen as a Banach space, which is denoted by $FBL[c_0]$. This embedding composed with the natural quotient from $FBL[c_0]$ onto $c_0$ gives the identity map on $c_0$. Thus, $c_0$ is complemented in $FBL[c_0]$. As a consequence, we will obtain that $FBL[c_0]$ is not  projective.

The concepts of free and projective Banach lattices were introduced in \cite{dPW15}. If $A$ is a set with no extra structure, the free Banach lattice generated by $A$, denoted by $FBL(A)$, is a Banach lattice together with a bounded map $u : A \longrightarrow FBL(A)$ having the following universal property: for every Banach lattice $Y$ and every bounded map $v : A \longrightarrow Y$ there is a unique Banach lattice homomorphism $S : FBL(A) \longrightarrow Y$ such that $S \circ u = v$ and $\norm{S} = \sup \set{\norm{v(a)} : a \in A}$. The same idea is applied by A. Avil\'{e}s, J. Rodr\'{i}guez and P. Tradacete to define the concept of the free Banach lattice generated by a Banach space $E$, $FBL[E]$. This is a Banach lattice together with a bounded operator $u:E\To FBL[E]$  such that for every Banach lattice $Y$ and every bounded operator $T : E \longrightarrow Y$ there is a unique Banach lattice homomorphism $S : FBL[E] \longrightarrow Y$ such that $S \circ u = v$ and $\norm{S} = \norm{T}$.

In \cite{ART18} and \cite{dPW15}, the corresponding authors show that both objects exist and are unique up to Banach lattices isometries. Moreover, A. Avil\'{e}s, J. Rodr\'{i}guez and P. Tradacete give an explicit description of them in \cite{ART18}.

Let $A$ be a non-empty set. For $x \in A$, let $\delta_x:[-1,1]^A \longrightarrow [-1,1]$ be the evaluation function given by $\delta_x(x^*) = x^*(x)$ for every $x^* \in [-1,1]^A$, and for every $f:[-1,1]^A \longrightarrow \mathbb{R}$ define $$\|f\| = \sup \set{\sum_{i = 1}^n \abs{ f(x_{i}^{\ast})} :  n \in \mathbb{N}, \, x_1^{\ast}, \ldots, x_n^{\ast} \in [-1,1]^A, \text{ }\sup_{x \in A} \sum_{i=1}^n \abs{x_i^{\ast}(x)} \leq 1 }.$$ 

The Banach lattice $FBL(A)$ is the Banach lattice generated by the evaluation functions $\delta_x$ inside the Banach lattice of all functions $f:[-1,1]^A\To\mathbb{R}$ with finite norm. The natural identification of $A$ inside $FBL(A)$ is given by the map $u: A \longrightarrow FBL(A)$ where $u(x) = \delta_x$. Since every function in $FBL(A)$ is a uniform limit of such functions, they are all continuous (with respect to the product topology) and positively homogeneous, i.e. they commute with multiplication by positive scalars.

Now, let $E$ be a Banach space. For a function $f:E^\ast \To \mathbb{R}$ consider the norm $$ \norm{f}_{FBL[E]} = \sup \set{\sum_{i = 1}^n \abs{f(x_{i}^{*})} : n \in \mathbb{N}, \, x_1^{*}, \ldots, x_n^{*} \in E^{*},\text{ }\sup_{x \in B_E} \sum_{i=1}^n \abs{x_i^{*}(x)} \leq 1 }.$$

The Banach lattice $FBL[E]$ is the closure of the vector lattice in $\mathbb R^{E^*}$ generated by the evaluations $\delta_x: x^\ast \mapsto x^\ast(x)$ with $x\in E$. These evaluations form the natural copy of $E$ inside $FBL[E]$. All the functions in $FBL[E]$ are positively homogeneous and $weak^\ast$-continuous  when restricted to the closed unit ball $B_{E^\ast}$.
	
The notions of free and projective objects are closely related in the general theory of categories. In the setting of Banach lattices, de Pagter and Wickstead \cite{dPW15} introduced projectivity in the following form:

\begin{defn}\label{projdef} A Banach lattice $P$ is \textit{projective} if whenever $X$ is a Banach lattice, $J$ a closed ideal in $X$ and $Q : X \longrightarrow X/J$ the quotient map, then for every Banach lattice homomorphism $T : P \longrightarrow X/J$ and $\varepsilon > 0$, there is a Banach lattice homomorphism $\hat{T} : P \longrightarrow X$ such that $T = Q \circ \hat{T}$ and $\|\hat{T}\| \leq (1 + \varepsilon)\norm{T}$.
\end{defn}

Some examples of projective Banach lattices given in \cite{dPW15} include $FBL(A)$, $\ell_1$, all finite dimensional Banach lattices and Banach lattices of the form $C(K)$, where $K$ is a compact neighborhood retract of $\mathbb{R}^n$. They also prove that $\ell_{\infty}$ and $c$ are not projective. In this paper we will focus on $c_0$ and $FBL[c_0]$.

\section{Non-projectivity of $c_0$ as a Banach lattice}

In this section we are going to prove that $c_0$, seen as a Banach lattice, is not projective. We will use the following fact (see \cite[Proposition 2.1]{ARA19}):

\begin{prop}\label{quotientofprojective}

Let $P$ be a projective Banach lattice, $\mathcal{I}$ an ideal of $P$ and $\pi:P\To P/\mathcal{I}$ the quotient map. The quotient $P/\mathcal{I}$ is projective if and only if for every $\varepsilon>0$ there exists a Banach lattice homomorphism $u_\varepsilon:P/\mathcal{I}\To P$ such that $\pi\circ u_\varepsilon = id_{P/\mathcal{I}}$ and $\|u_\varepsilon\|\leq 1+\varepsilon$.

\end{prop}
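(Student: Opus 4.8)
The plan is to prove the two implications separately, since the statement is an equivalence. Both directions reduce to composing Banach lattice homomorphisms and tracking the multiplicative error constants, so no deep structural input is needed beyond applying the definition of projectivity (Definition~\ref{projdef}) to well-chosen data.

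For the implication assuming that $P/\mathcal{I}$ is projective, I would simply test the definition of projectivity against the ambient lattice $X=P$ itself, with the closed ideal $J=\mathcal{I}$ and the quotient $Q=\pi:P\To P/\mathcal{I}$. Taking $T=id_{P/\mathcal{I}}:P/\mathcal{I}\To P/\mathcal{I}$, which is a lattice homomorphism of norm $1$, projectivity of $P/\mathcal{I}$ yields, for each $\varepsilon>0$, a lattice homomorphism $\hat{T}:P/\mathcal{I}\To P$ with $\pi\circ\hat{T}=id_{P/\mathcal{I}}$ and $\norm{\hat{T}}\leq(1+\varepsilon)\norm{T}=1+\varepsilon$. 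Setting $u_\varepsilon=\hat{T}$ gives exactly the required lifting. This direction becomes immediate once one realizes that the abstract lifting property should be applied to $P$ in the role of the covering lattice.

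For the converse, assume we are given the liftings $u_\varepsilon$, and let an arbitrary $X$, a closed ideal $J$, the quotient $Q:X\To X/J$, a lattice homomorphism $T:P/\mathcal{I}\To X/J$, and $\varepsilon>0$ be given; I must produce $\hat{T}:P/\mathcal{I}\To X$ with $Q\circ\hat{T}=T$ and $\norm{\hat{T}}\leq(1+\varepsilon)\norm{T}$. The idea is to route the problem through $P$. First consider $T\circ\pi:P\To X/J$, a lattice homomorphism with $\norm{T\circ\pi}\leq\norm{T}$ because $\pi$ is contractive. Since $P$ is projective, for any $\delta>0$ this map lifts to a lattice homomorphism $\hat{S}:P\To X$ with $Q\circ\hat{S}=T\circ\pi$ and $\norm{\hat{S}}\leq(1+\delta)\norm{T\circ\pi}\leq(1+\delta)\norm{T}$. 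Now choose a lifting $u_\eta:P/\mathcal{I}\To P$ with $\norm{u_\eta}\leq1+\eta$, and define $\hat{T}=\hat{S}\circ u_\eta$. Then $Q\circ\hat{T}=Q\circ\hat{S}\circ u_\eta=T\circ\pi\circ u_\eta=T$, using $\pi\circ u_\eta=id_{P/\mathcal{I}}$, and $\hat{T}$ is a lattice homomorphism as a composition of such.

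The only real work, and the step I would watch most carefully, is the norm bookkeeping: we have $\norm{\hat{T}}\leq\norm{\hat{S}}\,\norm{u_\eta}\leq(1+\delta)(1+\eta)\norm{T}$, so it remains to choose $\delta,\eta>0$ small enough that $(1+\delta)(1+\eta)\leq1+\varepsilon$, which is always possible. I expect no genuine obstacle here; the subtlety is purely in ordering the quantifiers correctly (the parameter governing $u_\eta$ is chosen after, and independently of, the $\varepsilon$ coming from the projectivity we are trying to establish) and in checking that each constructed map is a genuine lattice homomorphism rather than merely a bounded operator.
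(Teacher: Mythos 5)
Your proof is correct. Note that the paper itself does not prove this proposition --- it is quoted from reference [1] (Avil\'es--Rodr\'iguez Abell\'an) --- but your argument is the standard one and is sound in both directions: testing projectivity of $P/\mathcal{I}$ against $X=P$, $J=\mathcal{I}$, $T=id_{P/\mathcal{I}}$ for the forward implication, and for the converse factoring the lift as $\hat{T}=\hat{S}\circ u_\eta$ through the projectivity of $P$, with the $(1+\delta)(1+\eta)\leq 1+\varepsilon$ bookkeeping handled exactly as it should be.
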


Let $L = \mathcal{P}_{fin}^+ (\omega) = \mathcal{P}_{fin}(\omega) \setminus \set{\emptyset}$ be the set of the finite parts of $\omega$ without the empty set.

For $A \in L$ let us define the map $\chi_A : L \longrightarrow [-1,1]$ given by $\chi_A(B) = 1$ if $B \subset A$ and $\chi_A(B) = 0$ if $B \not\subset A$.

Let $\Phi: FBL(L) \longrightarrow c_0$ be the map given by $$\Phi(f) = \big( f \left( \left( \chi_A( \set{1}) \right)_{A \in L} \right), f \left( \left( \chi_A( \set{2}) \right)_{A \in L} \right), \ldots \big) = \big( f \left( \left( \chi_A( \set{n}) \right)_{A \in L} \right) \big)_{n \in \mathbb{N}}$$ for every $f: [-1,1]^L \longrightarrow \mathbb{R} \in FBL(L)$.

\begin{lem}
\label{Lemm2}
The map $\Phi: FBL(L) \longrightarrow c_0$ has the following properties:

\begin{enumerate}

\item $\Phi(\delta_A) = \sum_{i \in A}e_i \in c_0$ for every $A \in L$.

\item $\Phi$ is a Banach lattice homomorphism that is well-defined, i.e., $\Phi(f) \in c_0$ for every $f \in FBL(L)$.

\item $\Phi$ is surjective.

\end{enumerate}

\end{lem}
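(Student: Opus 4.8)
The plan is to verify the three properties in order, starting with the concrete computation and then using it to establish the structural claims.

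First I would compute $\Phi(\delta_A)$. Since $\delta_A$ is the evaluation function $\delta_A(x^*) = x^*(A)$ for $x^* \in [-1,1]^L$, the $n$-th coordinate of $\Phi(\delta_A)$ is $\delta_A\big((\chi_B(\{n\}))_{B \in L}\big)$, which by definition of evaluation equals the value of the point $(\chi_B(\{n\}))_{B\in L}$ at the coordinate indexed by $A$, namely $\chi_A(\{n\})$. Now $\chi_A(\{n\}) = 1$ precisely when $\{n\} \subset A$, i.e. when $n \in A$, and $0$ otherwise. Hence the $n$-th coordinate is $1$ if $n \in A$ and $0$ if $n \notin A$, giving $\Phi(\delta_A) = \sum_{i \in A} e_i$, which is a finitely supported vector and so lies in $c_0$. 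This settles (1).

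For (2), I would exploit the universal property defining $FBL(L)$. Define $v : L \to c_0$ by $v(A) = \sum_{i \in A} e_i$; this is a bounded map since $\norm{v(A)}_{c_0} = 1$ for every $A \in L$. By the universal property there is a unique Banach lattice homomorphism $S : FBL(L) \to c_0$ with $S \circ u = v$ and $\norm{S} = \sup_{A} \norm{v(A)} = 1$. The key point is then to check that $S$ coincides with $\Phi$. Since both agree on the generators $\delta_A = u(A)$ by part (1), and since $\Phi$ is manifestly a lattice homomorphism on the sublattice generated by the $\delta_A$ (evaluation at a fixed point of $[-1,1]^L$ is linear and commutes with $\vee, \wedge, \abs{\cdot}$), the two maps agree on the dense vector sublattice generated by the evaluations; the honest verification that $\Phi$ is continuous — equivalently bounded — is what lets us pass to the closure and conclude $\Phi = S$ on all of $FBL(L)$. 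In particular $\Phi$ is a well-defined lattice homomorphism into $c_0$. I expect this identification step to be the main obstacle, since one must confirm that $\Phi$ as defined by coordinatewise evaluation genuinely lands in $c_0$ (i.e. that each $f \in FBL(L)$ yields a sequence tending to $0$) rather than merely in $\ell_\infty$; the cleanest route is to note that for $f$ in the generating sublattice this holds by part (1) and linearity, and then to use the boundedness of $\Phi$ together with density to extend the conclusion to all of $FBL(L)$, invoking that $c_0$ is closed in $\ell_\infty$.

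For (3), surjectivity, I would note that $\Phi$ is a bounded lattice homomorphism whose range contains every $\sum_{i \in A} e_i$ with $A$ finite, by (1). These vectors include each standard basis vector $e_i = \Phi(\delta_{\{i\}})$, and their linear combinations span a dense sublattice of $c_0$. Since a bounded linear image need not a priori be closed, the efficient argument is to show the range is all of $c_0$ directly: given any $y \in c_0$, one approximates $y$ by finitely supported vectors, lifts each through the explicit generators, and controls norms using $\norm{\Phi} = 1$; alternatively, since $\Phi$ is a surjection onto a dense subspace and is a quotient-type map, one appeals to the open mapping theorem after verifying the range is closed. The simplest rigorous path is to exhibit, for each finitely supported $y = \sum_{i} \lambda_i e_i$, an explicit preimage built from the $\delta_{\{i\}}$, and then pass to the limit. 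This establishes that $\Phi$ maps onto $c_0$.
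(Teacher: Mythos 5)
Parts (1) and (2) of your proposal are essentially correct. For (2) you take a slightly different route from the paper: the paper notes that every $f\in FBL(L)$ is continuous for the product topology and positively homogeneous, and that the points $x_n^*=(\chi_A(\set{n}))_{A\in L}$ converge pointwise to $0$, so $f(x_n^*)\to f(0)=0$ directly for \emph{every} $f$; you instead verify membership in $c_0$ on the generating vector sublattice and extend by density, using $\norm{\Phi}\le 1$ as a map into $\ell_\infty$ and the closedness of $c_0$ in $\ell_\infty$. Both arguments are sound, though your detour through the universal property is redundant: once $\Phi$ itself is shown to be a bounded lattice homomorphism into $c_0$, the abstract homomorphism $S$ plays no role.

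Part (3) has a genuine gap. Your plan is to lift each finitely supported $y=\sum_{i\in F}\lambda_i e_i$ through the singleton generators, i.e.\ to $\sum_{i\in F}\lambda_i\delta_{\set{i}}$, and then ``pass to the limit.'' This cannot work, because such lifts have $\ell_1$-type norms: testing against the functionals $x_i^*\in[-1,1]^L$ ($i\in F$) defined by $x_i^*(\set{i})=\operatorname{sign}(\lambda_i)$ and $x_i^*(B)=0$ for $B\neq\set{i}$, which satisfy the constraint $\sup_{B\in L}\sum_{i\in F}\abs{x_i^*(B)}\le 1$ because their supports in $L$ are disjoint, gives $\norm{\sum_{i\in F}\lambda_i\delta_{\set{i}}}_{FBL(L)}\ge\sum_{i\in F}\abs{\lambda_i}$. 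Hence for $y\in c_0\setminus\ell_1$ the lifts of the truncations of $y$ have unbounded norms and are not Cauchy, so there is no limit to pass to; and boundedness plus dense range does not imply surjectivity. The open-mapping alternative you mention is circular, since closedness of the range is exactly what is in question. The missing idea is the paper's decomposition along \emph{nested} sets: for $x\ge 0$ choose a decreasing rearrangement $x_{n_1}\ge x_{n_2}\ge\cdots$, put $A_j=\set{n_1,\dots,n_j}$ and $\lambda_j=x_{n_j}-x_{n_{j+1}}\ge 0$; then $x=\sum_j\lambda_j e_{A_j}$ with $\sum_j\lambda_j=x_{n_1}=\norm{x}_\infty<\infty$, so the series $\sum_j\lambda_j\delta_{A_j}$ converges absolutely in $FBL(L)$ (each $\delta_{A_j}$ has norm one) and is an explicit preimage of $x$; signed $x$ are handled via $x=x^+-x^-$. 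The point is that $e_A=\sum_{i\in A}e_i$ has a norm-one preimage in the \emph{single} generator $\delta_A$, and only by using these non-singleton generators in a telescoping decomposition does one obtain the norm control needed to sum the series.
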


\begin{proof}

The first assertion follows from the definition of $\Phi$.
For every $n\in \nat$ let $g_n: L \longrightarrow [-1,1]$ be the function  $g_n= \left( \chi_A( \set{n}) \right)_{A \in L}.$ Then the sequence $\left(g_n\right)_{n \in \nat}$ is pointwise convergent to zero, so $f(g_n)$ converges to zero and $\Phi(f)\in c_0$. 
Since $\Phi$ preserves linear combinations, suprema, infima and $\norm{\Phi(f)}\leq \norm{f}$ for every $f \in FBL(L)$, we have that $\Phi$ is a Banach lattice homomorphism whose image is in $c_0$.

Let us prove property \textit{(3)}.
Let $x = (x_1, x_2, \ldots) \in c_0$ and suppose, without loss of generality, that $x \geq 0$.
Fix a sequence of natural numbers $\left(n_i\right)_{i \in \nat}$ such that $x_{n_1} \geq x_n$ for every $n \in \mathbb{N}$ and $x_{n_{i+1}} \geq x_n$ for every $n \in \mathbb{N} \setminus \set{n_1 \ldots, n_i}$.

Now, let $A_i = \set{n_1, \ldots, n_i}$ and $\lambda_i = x_{n_i} - x_{n_{i+1}}$ for every $i = 1, 2, \ldots$

For $A \in L$, if we put $e_A := \sum_{i \in A}e_i$, we have that $x = \sum_{j=1}^{\infty}\lambda_j e_{A_j}$, and then, $$x = \sum_{j=1}^{\infty}\lambda_j \Phi(\delta_{A_j}) = \sum_{j=1}^{\infty}\Phi(\lambda_j \delta_{A_j}) = \Phi \big(\sum_{j=1}^{\infty}\lambda_j \delta_{A_j}\big),$$

where the last element $\sum_{j=1}^{\infty}\lambda_j \delta_{A_j}$ is well-defined since $\sum_{j=1}^{\infty}\lambda_j < \infty$ and each $\delta_{A_j}$ has norm one.
\end{proof}

Thus, $\Phi$ is a quotient map. We are going to prove that there is no bounded Banach lattice homomorphism $\varphi: c_0 \longrightarrow FBL(L)$ such that $\Phi \circ \varphi = id_{c_0}$. This fact will be a consequence of the following Lemma:

\begin{lem}
\label{lemmaaux}
Let $A$ be an infinite set, $(x_n^*)_{ n \in \nat }$ a sequence in $[-1,1]^A$ and $(f_n)_{ n \in \nat }$  a sequence in $FBL(A)$ with the following properties:
\begin{enumerate}
	\item $f_n \geq0 $ for every $n\in \nat$;
	\item $f_n(x_n^*)=1$ for every $n \in \nat$;
	\item For every finite set $F\subseteq A$ there is a natural number $n$ such that $x_n^*|_F=0$, i.e. the restriction of $x_n^*$ to $F$ is null.
\end{enumerate}
Then for every $\varepsilon>0$ there is a subsequence $(f_{n_k})_{ k \in \nat }$ such that $$\norm{ \sum_{k=1}^{m} f_{n_k} } \geq m - \varepsilon \mbox{ for every }m\in \nat.$$
\end{lem}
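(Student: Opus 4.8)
The plan is to test the $FBL(A)$ norm of the partial sums $\sum_{k=1}^m f_{n_k}$ against a family of evaluation points built by truncating the $x_{n_k}^*$. Recall that $\norm{\,\cdot\,}$ is the supremum of $\sum_j \abs{f(y_j^*)}$ over all finite families $y_1^*,\dots,y_N^* \in [-1,1]^A$ subject to $\sup_{x\in A}\sum_j \abs{y_j^*(x)}\le 1$. If one could simply take $y_j^*=x_{n_j}^*$, then since each $f_{n_k}\ge 0$ and $f_{n_j}(x_{n_j}^*)=1$, the diagonal terms alone would give $\sum_{j}\big(\sum_k f_{n_k}\big)(x_{n_j}^*)\ge m$; the obstruction is that the family $x_{n_1}^*,\dots,x_{n_m}^*$ need not satisfy the normalization constraint. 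The whole point of the construction is therefore to replace each $x_{n_j}^*$ by a truncation with \emph{pairwise disjoint supports}, for which the constraint is automatic, while barely changing the value of $f_{n_j}$.

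To make the truncations legitimate I would first use continuity. Since every element of $FBL(A)$ is continuous on $[-1,1]^A$ in the product topology, for each $j$ and each prescribed tolerance there is a \emph{finite} set $F_j \subseteq A$ such that any point agreeing with $x_{n_j}^*$ on $F_j$ is sent by $f_{n_j}$ into a small neighborhood of $f_{n_j}(x_{n_j}^*)=1$. Fixing positive numbers $\varepsilon_j$ with $\sum_j \varepsilon_j<\varepsilon$, I would build the subsequence by induction: having chosen $n_1<\dots<n_{k-1}$ with continuity sets $F_1,\dots,F_{k-1}$, I would apply hypothesis (3) to the finite set $F_1\cup\dots\cup F_{k-1}$ to select $n_k$ with $x_{n_k}^*$ vanishing there, and then extract its continuity set $F_k$. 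Defining $y_k^*$ to be $x_{n_k}^*$ restricted to $F_k$ (and $0$ elsewhere), the support of $y_k^*$ avoids $F_1\cup\dots\cup F_{k-1}$ and hence is disjoint from the supports of the earlier $y_j^*$, each of which lies in its own $F_j$. Thus the $y_j^*$ are pairwise disjointly supported, so $\sup_{x}\sum_{j=1}^m\abs{y_j^*(x)}\le 1$, while $y_j^*$ agrees with $x_{n_j}^*$ on $F_j$, giving $f_{n_j}(y_j^*)>1-\varepsilon_j$. Testing the norm against $y_1^*,\dots,y_m^*$ and discarding the off-diagonal contributions, which are nonnegative by hypothesis (1), then yields
\[
\Big\|\sum_{k=1}^m f_{n_k}\Big\|\ \ge\ \sum_{j=1}^m \Big(\sum_{k=1}^m f_{n_k}\Big)(y_j^*)\ \ge\ \sum_{j=1}^m f_{n_j}(y_j^*)\ >\ \sum_{j=1}^m(1-\varepsilon_j)\ >\ m-\varepsilon .
\]

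The step I expect to require the most care is guaranteeing that the inductive choice can always be made with $n_k>n_{k-1}$, i.e.\ that hypothesis (3) furnishes \emph{infinitely many} valid indices for each finite set, not merely one. I would handle this by ruling out the degenerate case: if some $x_n^*$ were identically $0$, then positive homogeneity together with continuity would force $f_n(0)=0$, contradicting $f_n(x_n^*)=1$; so no $x_n^*$ is the zero function. A short pigeonhole argument on hypothesis (3) then shows that for every finite $F$ the set $\set{n : x_n^*|_F=0}$ is infinite, since otherwise one could enlarge $F$ by adjoining, for each of the finitely many exceptional indices, a coordinate on which the corresponding $x_n^*$ is nonzero, producing a finite set on which no $x_n^*$ vanishes and contradicting (3). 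This makes the strictly increasing choice of indices possible. The only remaining bookkeeping is the tolerances: using a summable sequence $\varepsilon_j$ rather than a single $\varepsilon$ is precisely what keeps the total error below $\varepsilon$ uniformly in $m$.
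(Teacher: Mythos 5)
Your proof is correct and follows essentially the same route as the paper's: inductively truncate each $x_{n_k}^*$ to a finite ``continuity set'' $F_k$, using hypothesis (3) to make the truncations $y_k^*$ pairwise disjointly supported, and then test the $FBL(A)$ norm against $y_1^*,\ldots,y_m^*$, dropping the nonnegative off-diagonal terms. The only difference is that you additionally verify that hypothesis (3) supplies infinitely many admissible indices for each finite set (so that the chosen indices can be made strictly increasing), a point the paper's proof passes over silently; your pigeonhole argument for this, based on the fact that positive homogeneity forces $x_n^* \neq 0$, is correct.
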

\begin{proof}
Fix $\varepsilon>0$ and $f_{n_1}:=f_1$. Since the elements of $FBL(A)$ are continuous with respect to the product topology, there is a neighborhood $U_1$ of $x_1^*$ such that $f_1(x^*)\geq 1-\frac{\varepsilon}{2}$ whenever $x^* \in U_1$.
In particular, there is a finite set $F_1 \subseteq A$ such that $f_1(x^*)\geq 1-\frac{\varepsilon}{2}$ whenever $x^*|_{F_1} = x_1^*|_{F_1}$.

We recursively construct the subsequence $(f_{n_k})_{ k \in \nat }$ and the sequence of sets $(F_k)_{ k \in \nat }$. Suppose that we have $f_{n_1}, \ldots, f_{n_k}$ and $F_1 , \ldots, F_{k}$ finite subsets of $A$ such that $x_{n_i}^*|_{F_1 \cup F_2 \cup \ldots \cup F_{i-1}}=0$ and
$f_{n_i}(x^*)\geq 1-\frac{\varepsilon}{2^i}$ whenever $x^*|_{F_i} = x_{n_i}^*|_{F_i}$.

Property $(3)$ guarantees the existence of a number $n_{k+1} \in \nat$ such that $x_{n_{k+1}}^*|_{F_1 \cup F_2 \cup \ldots \cup F_{k}}=0$.
It follows from property $(2)$ that there is a finite set $F_{k+1} \subseteq A$ such that $f_{n_{k+1}}(x^*)\geq 1-\frac{\varepsilon}{2^{k+1}}$ whenever $x^*|_{F_{k+1}} = x_{n_{k+1}}^*|_{F_{k+1}}$.

For each $k\in \nat$ define $y^*_k: A \longrightarrow [-1,1]$ such that $y_k^*|_{F_k}=x_{n_k}^*|_{F_k}$ and $y_k^*(x)=0$ whenever $x \in A \setminus F_k$.
Notice that $f_{n_k}(y_k^*) \geq 1-\frac{\varepsilon}{2^k}$ for every $k\in \nat$.
On the other hand, if $k<k'$ and $y_{k}^*(x) \neq 0$ then $x \in F_k$ (by the definition of $y_k^*$) and therefore $x^*_{n_{k'}}(x)=0$, so $y^*_{k'}(x)=0$. It follows that $y^*_k$ and $y^*_{k'}$ have disjoint supports. In particular,
$$ \sup_{x \in A} \sum_{k=1}^m \abs{y_k^{\ast}(x)} \leq 1. $$

Thus,

$$\norm{ \sum_{k=1}^{m} f_{n_k} } = 
\sup \set{\sum_{i = 1}^n \abs{\sum_{k=1}^{m} f_{n_k}(z_{i}^{*})} : n \in \mathbb{N}, \, z_1^{*}, \ldots, z_n^{*} \in [-1,1]^A,\text{ }\sup_{x \in A} \sum_{i=1}^n \abs{z_i^{*}(x)} \leq 1 } \geq $$
$$ \geq \sum_{i = 1}^m \abs{\sum_{k=1}^{m} f_{n_k}(y_{i}^{*})}
\overset{\mbox{(1)}}{\geq} \sum_{k=1}^{m} f_{n_k}(y_{k}^{*}) \geq \sum_{k=1}^{m} \left( 1-\frac{\varepsilon}{2^k} \right)\geq m - \varepsilon $$

for every $m\in \nat$.
\end{proof}

\begin{thm}
The Banach lattice $c_0$ is not projective.
\end{thm}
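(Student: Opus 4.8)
The plan is to argue by contradiction, combining the projectivity of $FBL(L)$ with Lemma \ref{lemmaaux}. Lemma \ref{Lemm2} exhibits $\Phi$ as a surjective Banach lattice homomorphism realizing $c_0$ as the quotient $FBL(L)/\ker\Phi$, and $FBL(L)$ is projective (as recalled in the introduction). So I would invoke Proposition \ref{quotientofprojective}: if $c_0$ were projective, then for every $\varepsilon>0$ there would exist a Banach lattice homomorphism $\varphi:c_0\To FBL(L)$ with $\Phi\circ\varphi=id_{c_0}$ and $\norm{\varphi}\leq 1+\varepsilon$. The goal is to show that no such right inverse can have small norm, contradicting this for $\varepsilon$ small.

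Next I would produce the data to feed into Lemma \ref{lemmaaux}, taking $A=L$. Set $f_n:=\varphi(e_n)\in FBL(L)$ and $x_n^*:=g_n=\left(\chi_A(\set{n})\right)_{A\in L}\in[-1,1]^L$. Property $(1)$ holds because $\varphi$ is a lattice homomorphism and hence positive, so $f_n=\varphi(e_n)\geq 0$. Property $(2)$ holds because $f_n(x_n^*)=f_n(g_n)=\big(\Phi(\varphi(e_n))\big)_n=(e_n)_n=1$, using $\Phi\circ\varphi=id_{c_0}$ and the definition of $\Phi$. For property $(3)$, given a finite set $F=\set{A_1,\ldots,A_k}\sub L$, each $A_j$ is a finite subset of $\omega$, so $\bigcup_j A_j$ is finite; choosing any $n\notin\bigcup_j A_j$ gives $g_n(A_j)=\chi_{A_j}(\set{n})=0$ for all $j$, i.e.\ $x_n^*|_F=0$.

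Applying Lemma \ref{lemmaaux} with this same $\varepsilon$ yields a subsequence $(f_{n_k})_{k\in\nat}$ with $\norm{\sum_{k=1}^m f_{n_k}}\geq m-\varepsilon$ for every $m\in\nat$. On the other hand, by linearity $\sum_{k=1}^m f_{n_k}=\varphi\big(e_{n_1}+\cdots+e_{n_m}\big)$, and since the $n_k$ are distinct we have $\norm{e_{n_1}+\cdots+e_{n_m}}_{c_0}=1$, whence $\norm{\sum_{k=1}^m f_{n_k}}\leq\norm{\varphi}\leq 1+\varepsilon$. Combining the two estimates forces $m-\varepsilon\leq 1+\varepsilon$ for every $m\in\nat$, which is absurd (fixing $\varepsilon=\tfrac12$ would force $m\leq 2$ for all $m$). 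This contradiction shows that no right inverse of norm close to $1$ exists, so by Proposition \ref{quotientofprojective} the Banach lattice $c_0$ is not projective.

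The substance of the argument lives in Lemma \ref{lemmaaux}, whose disjointification construction forces the partial sums $\sum_k f_{n_k}$ to grow like $m$ even though their images $e_{n_1}+\cdots+e_{n_m}$ in $c_0$ stay bounded by $1$. Given that lemma, the theorem is only a matter of verifying its three hypotheses and reading off the contradiction; the sole step requiring any argument is $(3)$, and it reduces to the fact that a finite union of finite subsets of $\omega$ is finite.
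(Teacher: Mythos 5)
Your proof is correct and follows essentially the same route as the paper: identify $c_0$ as a quotient of the projective lattice $FBL(L)$ via $\Phi$, obtain a lattice-homomorphic right inverse $\varphi$ from Proposition \ref{quotientofprojective}, feed $f_n=\varphi(e_n)$ and $x_n^*=(\chi_A(\set{n}))_{A\in L}$ into Lemma \ref{lemmaaux}, and contradict the boundedness of $\varphi$ on the partial sums $e_{n_1}+\cdots+e_{n_m}$. The only cosmetic difference is that the paper contents itself with an arbitrary bound $\norm{\varphi}\leq C$ rather than the sharper $1+\varepsilon$ from the proposition, which changes nothing in the contradiction.
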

\begin{proof}
We argue by contradiction. Suppose $c_0$ is projective. Since $FBL(L)$ is projective, it follows from Proposition \ref{quotientofprojective} and Lemma \ref{Lemm2} the existence of a  bounded Banach lattice homomorphism $\varphi: c_0 \longrightarrow FBL(L)$ such that $\Phi \circ \varphi = id_{c_0}$. Set $f_n:= \varphi (e_n)$. Since $\varphi$ is a Banach lattice homomorphism and each $e_n$ is positive, we have that $f_n=\varphi(e_n)\geq 0$ for every $n\in \nat$. It follows from the equality $\Phi(f_n)=(\Phi \circ \varphi)(e_n) = e_n$  and the definition of $\Phi$ that $$f_n \left( \left( \chi_A( \set{n}) \right)_{A \in L} \right) = e_n(n)= 1$$
for every $n \in \nat$. Set $x_n^*=\left(\chi_A ( \set{n}) \right)_{A\in L}$ for every $n\in \nat$.
Notice that for every finite set $F\subseteq L$ we have  $x_n^* (S)=0$ for every $S\in F$ whenever $n \notin \bigcup_{S\in F} S$.

Thus, Lemma \ref{lemmaaux} asserts that 
for every $\varepsilon>0$ there is a subsequence $(f_{n_k})_{ k \in \nat }$ such that $$\norm{ \sum_{k=1}^{m} f_{n_k} } \geq m - \varepsilon \mbox{ for every }m\in \nat.$$
On the other hand, since $\varphi$ is bounded, there is a constant $C>0$ such that 
$$ \norm{ \sum_{k=1}^{m} f_{n_k} } = \norm{ \varphi \left( \sum_{k=1}^{m} e_{n_k} \right)}\leq C\norm{\sum_{k=1}^{m} e_{n_k}}_{\infty}=C \mbox{ for every }m\in \nat.$$

Thus, $m -\varepsilon \leq C$ for every $\varepsilon >0$ and every $m\in \nat$, which yields to a contradiction.

\end{proof}

\section{Complementability of $c_0$ in $FBL[c_0]$}

This section is devoted to the proof that $c_0$ is \textit{lattice-embeddable} in $FBL[c_0]$ as a Banach lattice, that is to say, there exist a Banach lattice homomorphism $u: c_0 \longrightarrow FBL[c_0]$ and two constants $K, M \geq 0$ such that $$K\norm{x}_{\infty} \leq \norm{u(x)}_{FBL[c_0]} \leq M\norm{x}_{\infty}$$ for every $x \in c_0.$ Moreover, we will prove that $c_0$ is complemented in $FBL[c_0]$.

By \cite[Theorem 4.50]{CB} we know that the Banach lattice $c_0$ is lattice-embeddable in a Banach lattice $E$ if and only if there exists a disjoint sequence $(x_n)_{n \in \mathbb{N}} \in E^+$ (the positive cone of $E$) such that 

\begin{itemize}

\item[a)]$(x_n)_{n \in \mathbb{N}}$ does not converge in norm to zero, and

\item[b)]the sequence of partial sums of $(x_n)_{n \in \mathbb{N}}$ is norm bounded, i.e., there exists some $M > 0$ satisfying $\norm{\sum_{i=1}^n x_i}_{E} \leq M$ for every $n \in \mathbb{N}$.

\end{itemize}

Thus, what we are going to do is to construct a sequence $(f_n)_{n \in \mathbb{N}} \in FBL[c_0]$ with the desired properties. The following lemma will be very useful:

%
%
%
%
%
%

\begin{lem}\label{funcionesContinuasPosHomogeneas}

Let $A$ be a set and $f:[-1,1]^A \longrightarrow \mathbb{R}$ a continuous and positively homogeneous function that depends on a finite amount of coordinates, i.e., there exists a finite subset $A_0 \subset A$ and $\tilde{f}:[-1,1]^{A_0} \longrightarrow \mathbb{R}$ such that $f(x^*) = \tilde{f}(x^*\vert_{A_0})$. Then, $f$ is in $FBL(A)$.

\end{lem}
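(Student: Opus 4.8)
The plan is to reduce the statement to a uniform-approximation problem on a compact set and then to upgrade uniform approximation to $FBL$-norm approximation by exploiting positive homogeneity. Write $A_0=\set{a_1,\dots,a_d}$ and identify $f$ with $\tilde f$ on the cube $[-1,1]^{A_0}$. Let $\mathcal L_0$ be the vector sublattice of functions on $[-1,1]^{A_0}$ generated by the coordinate evaluations $\delta_{a_1},\dots,\delta_{a_d}$; its elements are continuous, positively homogeneous, piecewise-linear functions, and, viewed on $[-1,1]^A$, each is a finite lattice-linear combination of the generators $\delta_a$ and therefore lies in $FBL(A)$. Since $FBL(A)$ is closed, it suffices to produce a sequence $(g_m)$ in $\mathcal L_0$ with $\hat g_m\to f$ in the $FBL$-norm, where $\hat g_m$ denotes $g_m$ regarded as a function on $[-1,1]^A$.

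First I would treat the uniform approximation. Any positively homogeneous $h$ attains its sup-norm on the $\ell_\infty$-sphere $S=\set{x:\|x\|_\infty=1}$, which is compact, so approximating $f$ uniformly on the cube is the same as approximating $f|_S$ in $C(S)$. I would then apply the lattice form of the Stone--Weierstrass theorem (Kakutani--Stone): $f|_S$ belongs to the uniform closure of the sublattice $\mathcal L_0|_S$ provided that for all $s,t\in S$ there is $g\in\mathcal L_0$ with $g(s)=f(s)$ and $g(t)=f(t)$. To verify this strong separation for $s\ne t$, it is enough to find $p,q\in\mathcal L_0$ with $p(s)=1,\ p(t)=0$ and $q(s)=0,\ q(t)=1$, so that $f(s)p+f(t)q$ does the job. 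If $s,t$ are linearly independent I take $p,q$ to be dual linear functionals; the only remaining case, since $s,t\in S$, is the antipodal one $t=-s$, where for a linear functional $\ell$ with $\ell(s)=1$ the functions $p=\ell\vee 0$ and $q=(-\ell)\vee 0$ work. This yields $g_m\in\mathcal L_0$ with $\norm{f-\hat g_m}_\infty\to 0$.

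Next I would pass from the sup-norm to the $FBL$-norm. The key estimate is that for a continuous, positively homogeneous $\phi$ depending only on $A_0$ one has $\|\phi\|_{FBL}\leq d\,\norm{\phi}_\infty$. Indeed, for admissible $x_1^*,\dots,x_n^*$, i.e. those with $\sum_i\abs{x_i^*(a)}\le 1$ for every coordinate $a$, positive homogeneity gives $\abs{\phi(x_i^*)}\leq \norm{\phi}_\infty\,\max_{a\in A_0}\abs{x_i^*(a)}$, whence $\sum_i\abs{\phi(x_i^*)}\leq \norm{\phi}_\infty\sum_{a\in A_0}\sum_i\abs{x_i^*(a)}\leq d\,\norm{\phi}_\infty$. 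Applying this to $\phi=f-\hat g_m$ gives $\|f-\hat g_m\|_{FBL}\leq d\,\norm{f-\hat g_m}_\infty\to 0$, and since each $\hat g_m\in FBL(A)$ and $FBL(A)$ is closed, we conclude $f\in FBL(A)$.

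I expect the main obstacle to be the conceptual point behind the last step: the $FBL$-norm dominates the sup-norm and is in general vastly larger, so uniform approximation is far from enough for an arbitrary function. What rescues the argument is that positive homogeneity together with dependence on finitely many coordinates forces the pointwise bound $\abs{\phi(x^*)}\leq\norm{\phi}_\infty\max_{a\in A_0}\abs{x^*(a)}$, which is exactly what absorbs the defining constraint of the $FBL$-norm and produces the clean estimate $\|\phi\|_{FBL}\leq d\,\norm{\phi}_\infty$. Once this is in place, the separation bookkeeping of Step 1, including the antipodal case, is routine.
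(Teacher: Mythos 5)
Your proof is correct, but it takes a genuinely different route from the paper's. The paper disposes of the lemma in two lines: it quotes \cite[Proposition 5.3]{dPW15} (for a finite set $A_0$, every continuous positively homogeneous function on $[-1,1]^{A_0}$ belongs to $FBL(A_0)$), and then transfers the conclusion to $FBL(A)$ via the composition-with-restriction lattice homomorphism $T:FBL(A_0)\To FBL(A)$, $T(g)(x^*)=g(x^*|_{A_0})$. You instead do both jobs by hand: you re-prove the finite-dimensional statement from scratch using the Kakutani--Stone lattice form of Stone--Weierstrass on the compact sphere $S$ (your case split is the right one, since two distinct points of the $\ell_\infty$-sphere are either linearly independent or antipodal, and your $p=\ell\vee 0$, $q=(-\ell)\vee 0$ handle the antipodal pair), and you replace the operator $T$ by the quantitative bound $\norm{\phi}_{FBL}\leq d\norm{\phi}_\infty$ for positively homogeneous functions depending on $d$ coordinates. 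The trade-off is instructive: the paper's argument is shorter and modular, but it leaves implicit the verification that $T$ is bounded and hence carries the closure $FBL(A_0)$ into $FBL(A)$ --- a check which, when unwound (restrict admissible families in $[-1,1]^A$ to $A_0$), is essentially the same computation as your norm estimate; your argument is self-contained, and the explicit constant $d$ makes transparent exactly why sup-norm approximation suffices in the much larger free norm, which is the real content of the step you correctly identified as the main obstacle.
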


\begin{proof}

The function $\tilde{f}:[-1,1]^{A_0} \longrightarrow \mathbb{R}$ is continuous and positively homogeneous. By \cite[Proposition 5.3]{dPW15}, $\tilde{f}$ is in $FBL(A_0)$.

Let $T: FBL(A_0) \longrightarrow FBL(A)$ be the map given by $T(g)(x^*) = g(x^*\vert_{A_0})$ for every $g:[-1,1]^{A_0} \longrightarrow \mathbb{R}$ and $x^* \in [-1,1]^A$.

Clearly, $f = T(\tilde{f})$, and then $f$ is in $FBL(A)$.

\end{proof}

Let $(N_n)_{n \in \mathbb{N}}$ be a strictly increasing sequence of natural numbers.

For $r \in \mathbb{R}$ let $r^+ = \max\set{r,0}$ be the positive part of $r$, and for every $n \in \mathbb{N}$ let $f_n: c_0^* \longrightarrow \mathbb{R}$ be the map given by $$f_n(x^*) = (|x_n^*|-N_n\max\set{|x_m^*| : m<n})^+ \cdot \Pi_{m > n}g_{nm}(x^*)$$ for every $x^*=(x_1^*,x_2^*,\ldots) \in c_0^*=\ell_1$, where $g_{nm}: c_0^* \longrightarrow [0,1]$ is any continuous function such that $g_{nm}(x^*) = 0$ if $N_m|x_n^*| \leq |x_m^*|$, $g_{nm}(x^*) = 1$ if $|x_m^*| \leq (N_m-1)|x_n^*|$ and $g_{nm}(x^*)=g_{nm}(\frac{x^*}{\norm{x^*}})$ whenever $x^* \neq 0$.

Let us see that $(f_n)_{n \in \mathbb{N}}$ is a disjoint sequence of positive elements which satisfies both properties a) and b):

\begin{lem}

$f_n \geq 0$ for every $n \in \mathbb{N}$ and $f_n \wedge f_l = 0$ for every $n \neq l$.

\end{lem}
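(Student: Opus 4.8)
The plan is to establish the two assertions separately, since positivity is immediate and disjointness is the substantive claim. For positivity, I would note that $f_n(x^*)$ is a product of two nonnegative factors: the term $(|x_n^*|-N_n\max\set{|x_m^*|:m<n})^+$ is nonnegative by the very definition of the positive part, and each $g_{nm}$ takes values in $[0,1]$ by hypothesis. Hence $f_n\geq 0$ pointwise on $c_0^*$, and this gives $f_n\geq 0$ in the lattice.

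For disjointness, fix $n\neq l$ and assume without loss of generality that $n<l$. The goal is to show $f_n(x^*)\cdot f_l(x^*)=0$ for every $x^*\in c_0^*$, since in the function lattice $f_n\wedge f_l=0$ is equivalent to the supports being disjoint, i.e. to $f_n(x^*)=0$ or $f_l(x^*)=0$ at each point. I would argue by contradiction: suppose there is an $x^*$ with both $f_n(x^*)>0$ and $f_l(x^*)>0$. The factor $g_{nl}$ appears in the product defining $f_n$ (since $l>n$), so $f_n(x^*)>0$ forces $g_{nl}(x^*)\neq 0$, which by the defining condition of $g_{nl}$ means $N_l|x_n^*|>|x_l^*|$. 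On the other hand, $f_l(x^*)>0$ forces its first factor to be positive, i.e. $|x_l^*|>N_l\max\set{|x_m^*|:m<l}$; since $n<l$, in particular $|x_l^*|>N_l|x_n^*|$. These two inequalities $N_l|x_n^*|>|x_l^*|$ and $|x_l^*|>N_l|x_n^*|$ are contradictory, which completes the argument.

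The main subtlety to keep straight is the precise logical reading of the threshold conditions on $g_{nl}$: the hypothesis gives $g_{nl}(x^*)=0$ on the region $N_l|x_n^*|\leq|x_l^*|$, so the contrapositive is that $g_{nl}(x^*)\neq 0$ implies the strict reverse inequality, and this is exactly the inequality that clashes with the positivity of the leading factor of $f_l$. I do not expect a genuine obstacle here; the construction was evidently engineered so that the ratio threshold controlling $g_{nl}$ and the magnitude threshold $N_l$ in the positive-part factor of $f_l$ are incompatible. The one point worth a careful word is that disjointness of the functions $f_n$ and $f_l$ as lattice elements is witnessed pointwise, so it suffices to verify $\min\set{f_n(x^*),f_l(x^*)}=0$ for all $x^*$, which is what the contradiction delivers.
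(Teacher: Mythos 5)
Your proof is correct and takes essentially the same approach as the paper: positivity is read off from the definition, and disjointness follows from the pointwise clash between the inequality $N_l|x_n^*| > |x_l^*|$ forced by $g_{nl}(x^*)\neq 0$ (a factor of $f_n$ when $n<l$) and the inequality $|x_l^*| > N_l|x_n^*|$ forced by the positive-part factor of $f_l$. Your explicit remark that disjointness in this lattice is witnessed pointwise is a point the paper leaves implicit, but the argument is the same.
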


\begin{proof}

The first assertion is clear. For the second one, suppose, for example, that $n < l$, and let $x^* \in c_0^*$ such that $f_l(x^*) \neq 0$. We have that $|x_l^*| > N_l \max \set{|x_m^*| : m<l}$. In particular, $|x_l^*| > N_l |x_n^*|$. Now, if $f_n(x^*) \neq 0$, we have that $g_{nm}(x^*) \neq 0$ for every $m > n$, and then, that $|x_m^*| < N_m |x_n^*|$ for every $m > n$. Taking $m = l$ we have a contradiction.

\end{proof}

\begin{lem}
\label{LemAux1}

$f_n$ is in $FBL[c_0]$ for every $n \in \mathbb{N}$.

\end{lem}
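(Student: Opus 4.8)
The plan is to approximate $f_n$ in the $FBL[c_0]$-norm by functions depending on finitely many coordinates and then invoke completeness. Write $P(x^*)=(\abs{x_n^*}-N_n\max\set{\abs{x_m^*}:m<n})^+$, so that $f_n=P\cdot\Pi_{m>n}g_{nm}$, and for each $M>n$ set $f_n^M:=P\cdot\Pi_{n<m\leq M}g_{nm}$. Each $f_n^M$ is continuous, positively homogeneous and depends only on $x_1^*,\ldots,x_M^*$, so $f_n^M(x^*)=F(x_1^*,\ldots,x_M^*)$ for a continuous, positively homogeneous $F:\Real^M\To\Real$. By \cite[Proposition 5.3]{dPW15} (used as in Lemma \ref{funcionesContinuasPosHomogeneas}, with the finite set $\set{1,\ldots,M}$), the restriction of $F$ to $[-1,1]^M$ lies in $FBL(\set{1,\ldots,M})$. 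The universal property of the free Banach lattice over $\set{1,\ldots,M}$, applied to the assignment $j\mapsto\delta_{e_j}\in FBL[c_0]$ (of supremum norm $\sup_j\norm{e_j}_\infty=1$), yields a Banach lattice homomorphism $S:FBL(\set{1,\ldots,M})\To FBL[c_0]$ with $S(\delta_j)=\delta_{e_j}$. Since $S$ is continuous and acts by the substitution $\delta_j\mapsto\delta_{e_j}$, it sends a function $g$ to $x^*\mapsto\tilde g(x_1^*,\ldots,x_M^*)$ (with $\tilde g$ the homogeneous extension of $g$): this is immediate on lattice-linear combinations of the $\delta_j$ and passes to the closure by continuity. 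Hence $S(F)=f_n^M$ and $f_n^M\in FBL[c_0]$ for every $M$.

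Next I would control $\norm{f_n^M-f_n}_{FBL[c_0]}$. Since $0\leq g_{nm}\leq1$ and $P\geq0$ we have $f_n\leq f_n^M$, and
\[ 0\leq f_n^M-f_n=P\cdot\Pi_{n<m\leq M}g_{nm}\cdot\Big(1-\Pi_{m>M}g_{nm}\Big)\leq P\cdot\sum_{m>M}\big(1-g_{nm}\big), \]
using $1-\Pi_{m>M}g_{nm}\leq\sum_{m>M}(1-g_{nm})$. Now $P(x^*)\leq\abs{x_n^*}$, and $1-g_{nm}(x^*)\neq0$ forces $\abs{x_m^*}>(N_m-1)\abs{x_n^*}$, so each term satisfies $\abs{x_n^*}\big(1-g_{nm}(x^*)\big)\leq\tfrac{\abs{x_m^*}}{N_m-1}$, giving
\[ 0\leq (f_n^M-f_n)(x^*)\leq\sum_{m>M}\frac{\abs{x_m^*}}{N_m-1}=:h_M(x^*). \]
Because the defining seminorm of $FBL[c_0]$ is monotone ($0\leq u\leq v$ pointwise yields $\norm{u}\leq\norm{v}$), it suffices to prove $\norm{h_M}_{FBL[c_0]}\To0$.

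This is the crux. Given $x_1^*,\ldots,x_k^*$ with $\sup_{x\in B_{c_0}}\sum_i\abs{x_i^*(x)}\leq1$, the quantity to estimate is $\sum_i h_M(x_i^*)=\sum_{m>M}\tfrac1{N_m-1}\sum_i\abs{(x_i^*)_m}$. The crude bound takes $x=e_m$, giving $\sum_i\abs{(x_i^*)_m}\leq1$ and hence $\norm{h_M}\leq\sum_{m>M}\tfrac1{N_m-1}$; this already closes the argument when $(N_m)$ grows fast enough that $\sum_m(N_m-1)^{-1}<\infty$. To handle an arbitrary strictly increasing sequence I would randomize the coordinate signs: since the constraint is a supremum over $x\in B_{c_0}$, averaging over finitely supported $x\in\set{-1,1}^{\nat}$ and applying Khintchine's inequality gives $\sum_i\norm{x_i^*}_2\leq\sqrt2$, whence, by Cauchy--Schwarz in $m$,
\[ \sum_i h_M(x_i^*)\leq\Big(\sum_{m>M}\tfrac1{(N_m-1)^2}\Big)^{1/2}\sum_i\norm{x_i^*}_2\leq\sqrt2\Big(\sum_{m>M}\tfrac1{(N_m-1)^2}\Big)^{1/2}. \]
As $(N_m)$ is strictly increasing in $\nat$, $N_m-1\geq m-1$, so $\sum_m(N_m-1)^{-2}<\infty$ and the right-hand side tends to $0$. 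Thus $\norm{f_n^M-f_n}\To0$; since each $f_n^M\in FBL[c_0]$ and this space is complete, $f_n\in FBL[c_0]$.

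The membership $f_n^M\in FBL[c_0]$ is routine, merely repackaging \cite[Proposition 5.3]{dPW15} through the substitution $\delta_j\mapsto\delta_{e_j}$. The main obstacle is the estimate on the tail $h_M$: the pointwise bound alone is insufficient, and one must use the precise form of the $FBL[c_0]$-norm. The naive per-coordinate estimate only works under a fast-growth (summability) hypothesis on $(N_n)$, whereas the square-function estimate above is what lets the conclusion hold for every strictly increasing sequence of natural numbers.
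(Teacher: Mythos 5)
Your proposal is correct, and its skeleton matches the paper's: you approximate $f_n$ by the finite sub-products $f_n^M$ (the paper's $h_k$, with $M=n+k$), prove each of these lies in $FBL[c_0]$ by reducing to finitely many coordinates, and then drive the error to zero in the $FBL[c_0]$-norm. The membership step is only cosmetically different: you invoke the universal property of $FBL(\{1,\ldots,M\})$ and the substitution $\delta_j \mapsto \delta_{e_j}$, while the paper factors the same substitution through an explicit operator $P: FBL(B_{c_0}) \longrightarrow \mathbb{R}^{c_0^*}$; both hinge on \cite[Proposition 5.3]{dPW15}, and both tacitly require choosing each $g_{nm}$ to depend only on the coordinates $n$ and $m$ (the paper's ``any continuous function'' phrasing does not literally force this, but its own proof makes the same assumption). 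Where you genuinely diverge is the tail estimate. Your bound $1-\Pi_{m>M}g_{nm} \leq \sum_{m>M}(1-g_{nm})$ creates the double sum $\sum_i\sum_{m>M}|x^*_{im}|/(N_m-1)$, which forces you into the Khintchine-plus-Cauchy--Schwarz square-function bound $\sqrt{2}\big(\sum_{m>M}(N_m-1)^{-2}\big)^{1/2}$; your diagnosis that the naive coordinate-by-coordinate bound fails when $\sum_m 1/(N_m-1)$ diverges is accurate, and your repair is valid for every strictly increasing $(N_m)$. The paper avoids the double sum altogether: since the factor $1-\Pi_{m>n+k}\,g_{nm}(x_i^*)$ is at most $1$, it picks, for each $i$ with a nonzero term, a \emph{single} witness $m_i>n+k$ with $g_{nm_i}(x_i^*)\neq 1$, obtaining $\sum_i(h_k-f_n)(x_i^*)\leq \frac{1}{N_{n+k}-1}\sum_i|x^*_{im_i}|$, and then its Claim --- proved by the same sign-averaging device as your Khintchine step, but extracting only one coordinate per functional --- gives $\sum_i|x^*_{im_i}|\leq 1$. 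The paper's route thus yields the cleaner rate $1/(N_{n+k}-1)$ with no growth considerations at all, and its Claim is reused verbatim in Lemma \ref{sumasParcialesfi}; your route costs a constant $\sqrt{2}$ and a worse (but still vanishing) rate, and makes explicit the square-function phenomenon that the paper's one-witness trick conceals.
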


\begin{proof}

Fix $n \in \mathbb{N}$. We are going to find a sequence of functions $(h_k)_{k \in \mathbb{N}} \in FBL[c_0]$ such that $\lim_{k \to +\infty} \norm{h_{k} - f_n}_{FBL[c_0]} = 0$. Then, we will have that $f_n \in FBL[c_0]$.

Let $h_{k}: c_0^* \longrightarrow \mathbb{R}$ be the map given by $$h_{k}(x^*) = (|x_n^*|-N_n\max\set{|x_m^*| : m<n})^+ \cdot \Pi_{n < m \leq n+k}~g_{nm}(x^*)$$ for every $x^*=(x_1^*,x_2^*,\ldots) \in c_0^*=\ell_1$. Let us see that $h_k \in FBL[c_0]$ for every $k \in \mathbb{N}$.

Notice that $h_{k}$ is continuous, positively homogeneous and satisfies that $h_k(x^*) = h_{k}(y^*)$ whenever $x_1^* = y_1^*, \ldots, x_{n+k}^* = y_{n+k}^*$. Let $P: FBL(B_{c_0}) \longrightarrow \mathbb{R}^{c_0^*}$ be the map given by $P(f)(x^*) = f((\frac{x^*(y)}{\norm{x^*}})_{y \in B_{c_0}}) \cdot \norm{x^*}$ if $x^* \neq 0$ and $P(f)(0) = f(0)$, where $0$ denotes the identically zero function in the corresponding space. We have that $P(FBL(B_{c_0}
)) \subset FBL[c_0]$ because $P$ maps the evaluation functions in $FBL(B_{c_0})$ to the evaluations functions in $FBL[c_0]$, preserves linear combinations, the lattice structure and $\norm{P(f)}_{FBL[c_0]} \leq \norm{f}_{FBL(B_{c_0})}$.

Now, let $\tilde{h_{k}}: [-1,1]^{B_{c_0}} \longrightarrow \mathbb{R}$ be the map given by $$\tilde{h_{k}}(z^*)=h_k((z^*(e_1),\ldots, z^*(e_{n+k}),0,0,\ldots))$$ for every $z^* \in [-1,1]^{B_{c_0}}$.
Since $\tilde{h_{k}}$ is continuous, positively homogeneous and depends only on finitely many coordinates, by Lemma \ref{funcionesContinuasPosHomogeneas} we have that $\tilde{h_{k}} \in FBL(B_{c_0})$. It follows that $P(\tilde{h_{k}}) = h_{k} \in FBL[c_0]$.

Now, by definition, we have that $$\norm{h_k-f_n}_{FBL[c_0]} = \sup \set{\sum_{i=1}^l |(h_k-f_n)|(x_i^*) : l \in \mathbb{N}, x_1^*, \ldots, x_l^* \in B_{\ell_1}, \sup_{x \in B_{c_0}}\sum_{i=1}^l|x_i^*(x)|\leq 1}.$$

Take $x_1^*, \ldots, x_l^* \in B_{\ell_1}$, with $x_i^* = (x_{i1}^*, x_{i2}^*, \ldots)$ for every $i = 1, \ldots, l$ and such that $(h_k-f_n)(x_i^*) \neq 0$ for every $ i = 1,\ldots, l$ and $\sup_{x \in B_{c_0}}\sum_{i = 1}^l|x_i^*(x)| \leq 1$.

Note that $h_k \geq f_n$, so we can remove absolute values in the previous expression. Then, we have that

\begin{eqnarray*} 
 \sum_{i=1}^l (h_k-f_n)(x_i^*) & = & \sum_{i=1}^l (|x_{in}^{*}| - N_n\max\set{|x_{im}^{*}| : m<n})^+ \cdot \\
 & \cdot & \big[\Pi_{n<m\leq n+k}~g_{nm}(x_i^*) - \Pi_{n<m}~g_{nm}(x_i^*)\big] \\
 & = & \sum_{i=1}^l (|x_{in}^{*}| - N_n\max\set{|x_{im}^{*}| : m<n})^+ \cdot \\
 & \cdot & \big[\Pi_{n<m\leq n+k}~g_{nm}(x_i^*) - \Pi_{n<m \leq n+k}~g_{nm}(x_i^*) \Pi_{n+k < m}~g_{nm}(x_i^*) \big] \\
 & = & \sum_{i=1}^l (|x_{in}^{*}| - N_n\max\set{|x_{im}^{*}| : m<n})^+ \cdot \\
 & \cdot & \Pi_{n<m\leq n+k}~g_{nm}(x_i^*)\big[ 1 - \Pi_{n+k <m}~g_{nm}(x_i^*)\big] \\
 & \leq & \sum_{i=1}^l |x_{in}^*| \big[ 1 - \Pi_{n+k <m}~g_{nm}(x_i^*)\big].
\end{eqnarray*}

Since $(h_{k}-f_n)(x_i^*) \neq 0$ for every $ i = 1,\ldots, l$, we have that $|x_{in}^*| \neq 0$ and also that $1 - \Pi_{n+k <m}~g_{nm}(x_i^*) \neq 0$. Thus, for every $i$ there exists $m_i > n+k$ such that $g_{nm_i}(x_i^*) \neq 1$, that is to say, $\frac{|x_{im_i}^*|}{|x_{in}^*|} > N_{m_i} - 1$. Since $N_{m_i} > N_{n+k}$, this implies that $|x_{in}^*| < \frac{1}{N_{n+k}-1}|x_{im_i}^*|$.

Thus, $$\sum_{i=1}^l (h_{k}-f_n)(x_i^*) \leq \sum_{i=1}^l |x_{in}^*| < \frac{1}{N_{n+k}-1}\sum_{i=1}^l |x_{im_i}^*|.$$

Therefore, since $\lim_{k \rightarrow +\infty}\frac{1}{N_{n+k}-1} =0$, the proof will follow from the following Claim.

\textbf{Claim.}  For every $x_1^*, x_2^*, \ldots, x_l^*\in B_{\ell_1}$ and every natural numbers $m_1, m_2, \ldots, m_l \in \nat$ we have
$$\sum_{i=1}^l |x_{im_i}^*| \leq 1 $$
whenever $\sup_{x \in B_{c_0}}\sum_{i = 1}^l|x_i^*(x)| \leq 1$.

\textit{Proof of the Claim.} Fix $m=\max\{m_i: 1\leq i \leq l \}.$ We show first that $$\sum_{i=1}^l |x_{im_i}^*| \leq \max \set{\sum_{i=1}^l\big|\sum_{j=1}^m \varepsilon (j)x_{ij}^*\big| : \varepsilon \in \set{-1,+1}^m}.$$

In fact,

\begin{eqnarray*} 
 \max \set{\sum_{i=1}^l\big|\sum_{j=1}^m\varepsilon (j)x_{ij}^*\big| : \varepsilon \in \set{-1,+1}^m} & \geq & \frac{1}{2^m}\sum_{\varepsilon \in \set{-1,+1}^m}\sum_{i=1}^l \big| \sum_{j=1}^m \varepsilon(j) x_{ij}^* \big| \\
 & = & \frac{1}{2^m}\sum_{\tilde{\varepsilon} \in \set{-1,+1}}\sum_{i=1}^l \sum_{\substack{\varepsilon \in \set{-1,+1}^m \\ \varepsilon(m_i) = \tilde{\varepsilon}}} \big| \sum_{j=1}^m \varepsilon(j) x_{ij}^* \big| \\
 & \geq & \frac{1}{2}\sum_{\tilde{\varepsilon} \in \set{-1,+1}}\sum_{i=1}^l \frac{1}{2^{m-1}} \big| \sum_{\substack{\varepsilon \in \set{-1,+1}^m \\ \varepsilon(m_i) = \tilde{\varepsilon}}} \sum_{j=1}^m \varepsilon(j) x_{ij}^* \big| \\
 & = &  \sum_{i=1}^l |x_{im_i}^*|.
\end{eqnarray*}

Let $\varepsilon \in \set{-1,+1}^m$ be the function which gives the maximum above. We have that $$\sum_{i=1}^l |x_{im_i}^*| \leq \sum_{i=1}^l\big|\sum_{j=1}^m\varepsilon (j)x_{ij}^*\big| = \sum_{i=1}^l |x_i^*(x)| \leq 1,$$ taking, in the equality, $x = (\varepsilon(1), \varepsilon(2), \ldots, \varepsilon(m), 0, \ldots) \in B_{c_0}$.

\end{proof}

\begin{lem}\label{sumasParcialesfi}

$\|\sum_{i=1}^n f_i \|_{FBL[c_0]} \leq 1$ for every $n \in \mathbb{N}$.

\end{lem}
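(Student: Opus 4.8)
The plan is to bound the $FBL[c_0]$-norm of the partial sums $\sum_{i=1}^n f_i$ directly from the defining formula for the norm. By definition,
\[
\Big\|\sum_{i=1}^n f_i\Big\|_{FBL[c_0]} = \sup\set{\sum_{j=1}^l \Big|\sum_{i=1}^n f_i(x_j^*)\Big| : l \in \nat,\ x_1^*,\ldots,x_l^* \in B_{\ell_1},\ \sup_{x \in B_{c_0}}\sum_{j=1}^l |x_j^*(x)| \leq 1}.
\]
Since all the $f_i$ are positive, the inner absolute values disappear, so it suffices to show that for any admissible family $x_1^*,\ldots,x_l^*$ (i.e. satisfying the constraint $\sup_{x \in B_{c_0}}\sum_{j=1}^l |x_j^*(x)| \le 1$) one has $\sum_{j=1}^l \sum_{i=1}^n f_i(x_j^*) \le 1$.

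**The key structural observation** driving the proof is the disjointness already established: for a fixed $x^* \in c_0^*$, at most one index $i$ contributes, because the $f_i$ are pairwise disjoint. More precisely, recall from the construction that $f_i(x^*) \neq 0$ forces $|x_i^*| > N_i \max\set{|x_m^*| : m < i}$ together with $|x_m^*| < N_m|x_i^*|$ for all $m > i$; these conditions single out a unique coordinate. So for each fixed $x_j^*$ the sum $\sum_{i=1}^n f_i(x_j^*)$ reduces to a single term $f_{i(j)}(x_j^*)$ for some index $i(j)$, and moreover $f_{i(j)}(x_j^*) \le |x_{j,i(j)}^*|$ by the elementary bound $(|x_{i}^*| - N_i\max\set{\ldots})^+ \cdot \Pi g_{im} \le |x_i^*|$ (each factor $g_{im}$ lying in $[0,1]$). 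Thus $\sum_{j=1}^l \sum_{i=1}^n f_i(x_j^*) \le \sum_{j=1}^l |x_{j,i(j)}^*|$, where $i(j)$ depends on $j$.

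**At this point** the bound $\sum_{j=1}^l |x_{j,i(j)}^*| \le 1$ follows immediately from the Claim proved inside the previous lemma (Lemma \ref{LemAux1}): it states exactly that for any choice of one coordinate $m_j$ from each functional $x_j^*$, the sum $\sum_{j=1}^l |x_{j,m_j}^*|$ is at most $1$ under the admissibility constraint. Applying it with $m_j = i(j)$ closes the estimate and gives $\|\sum_{i=1}^n f_i\|_{FBL[c_0]} \le 1$.

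**The main obstacle**, and the point requiring care, is justifying that the contributions genuinely collapse to one term per functional: one must verify that whenever two distinct $f_i$ could in principle both be tested against the same $x_j^*$, disjointness truly forbids both from being nonzero (this is the content of the disjointness lemma, which handles it cleanly). The only subtlety is that the selected index $i(j)$ must lie in the range $1,\ldots,n$, which is automatic since we only sum over $i \le n$; if no $f_i$ with $i \le n$ is nonzero at $x_j^*$, that functional simply contributes $0$ and the inequality is unaffected. Once this reduction is in place, the whole estimate is a direct invocation of the already-established Claim, so no new combinatorial input is needed.
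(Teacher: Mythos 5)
Your proof is correct and follows essentially the same route as the paper's: both collapse the sum $\sum_{i=1}^n f_i(x_j^*)$ to a single term per functional via the disjointness lemma, bound that term by $|x_{j,i_j}^*|$ using that the positive-part factor is at most $|x_{i_j}^*|$ and each $g_{i_jm}\in[0,1]$, and then invoke the Claim from Lemma~\ref{LemAux1} to conclude. The points you flag as needing care (one nonzero term per $x_j^*$, and functionals where every $f_i$ vanishes) are handled in the paper exactly as you handle them.
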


\begin{proof}

By definition, we have that $$\norm{\sum_{i=1}^n f_i}_{FBL[c_0]} = \sup \set{\sum_{j=1}^k(\sum_{i=1}^nf_i)(x_j^*) : k \in \mathbb{N}, x_1^*, \ldots, x_k^* \in B_{\ell_1}, \sup_{x \in B_{c_0}}\sum_{j=1}^k|x_j^*(x)|\leq 1}.$$

Fix $x_1^*, x_2^*, \ldots, x_k^* \in  B_{\ell_1}$ with $\sup_{x \in B_{c_0}} \sum_{j=1}^k |x_j^*(x)| \leq 1 $.
Since the functions $f_i$ are disjoint, for each $j=1,2,\ldots,k$ there is at most one $i_j \in \{1,2,\ldots,n\}$ such that $f_{i_j}(x_j^*) \neq 0$. Thus,
$$ \sum_{j=1}^k(\sum_{i=1}^nf_i)(x_j^*) = \sum_{j=1}^k f_{i_j}(x_j^*).$$

Without loss of generality, we suppose that $f_{i_j}(x_j^*)\neq 0$ for every $j=1,2, \ldots, k$.  

Notice that each $f_{i_j}(x_j^*) \leq | x_{ji_j}^*| $ for every $j=1,2, \ldots, k$, so 
$$ \sum_{j=1}^k(\sum_{i=1}^nf_i)(x_j^*) = \sum_{j=1}^k f_{i_j}(x_j^*) \leq \sum_{j=1}^k  | x_{ji_j}^*|.$$

Now the Claim in Lemma \ref{LemAux1} applied to $x_1^*, \ldots, x_k^*$ asserts that
$\sum_{j=1}^k  | x_{ji_j}^*| \leq 1$ and therefore
$$ \sum_{j=1}^k(\sum_{i=1}^nf_i)(x_j^*) \leq \sum_{j=1}^k  | x_{ji_j}^*| \leq 1.$$
Thus,  $\norm{\sum_{i=1}^n f_i}_{FBL[c_0]} \leq 1$.

\end{proof}

\begin{lem}

The sequence $(f_n)_{n \in \mathbb{N}}$ does not converge in norm to zero. Furthermore, $\norm{f_n}_{FBL[c_0]} = 1$ for every $n \in \mathbb{N}$.

\end{lem}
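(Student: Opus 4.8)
The plan is to establish the two inequalities $\norm{f_n}_{FBL[c_0]}\leq 1$ and $\norm{f_n}_{FBL[c_0]}\geq 1$ separately; the displayed equality is then immediate, and since it holds for every $n$ the sequence $(f_n)_{n\in\mathbb{N}}$ cannot converge in norm to zero.

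For the upper bound I would simply invoke the monotonicity of the lattice norm together with the preceding lemma. Since every $f_i$ is positive, one has $0\leq f_n\leq \sum_{i=1}^n f_i$, and therefore $\norm{f_n}_{FBL[c_0]}\leq \norm{\sum_{i=1}^n f_i}_{FBL[c_0]}\leq 1$ by Lemma \ref{sumasParcialesfi}. No further computation is needed here.

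For the lower bound it suffices to exhibit a single admissible test functional in the formula defining the norm. I would take $l=1$ and $x_1^\ast=e_n$, the $n$-th coordinate vector of $\ell_1=c_0^\ast$. Then the factor $(\abs{x_n^\ast}-N_n\max\set{\abs{x_m^\ast}:m<n})^+$ equals $(1-0)^+=1$, because every coordinate of $e_n$ other than the $n$-th vanishes; and for each $m>n$ one has $\abs{x_m^\ast}=0\leq (N_m-1)\abs{x_n^\ast}=N_m-1$, so that $g_{nm}(e_n)=1$. Hence the whole product is $1$ and $f_n(e_n)=1$. Since $\norm{e_n}_{\ell_1}=1$ and $\sup_{x\in B_{c_0}}\abs{e_n(x)}=1\leq 1$, the functional $e_n$ is admissible, whence $\norm{f_n}_{FBL[c_0]}\geq f_n(e_n)=1$. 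Combining the two bounds gives $\norm{f_n}_{FBL[c_0]}=1$.

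I expect essentially no obstacle in this argument: the genuine work was already done in establishing disjointness and the uniform bound on the partial sums in Lemma \ref{sumasParcialesfi}. The only point requiring a little care is the bookkeeping in evaluating the cut-off functions $g_{nm}$ at $e_n$, to confirm that none of the factors in the infinite product depresses the value below $1$; once that is checked, the equality $\norm{f_n}_{FBL[c_0]}=1$ for all $n$ forces the sequence not to be norm-null.
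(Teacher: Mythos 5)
Your proof is correct, and it differs from the paper's in one place: the upper bound. For the lower bound you do exactly what the paper does, namely evaluate at the admissible functional $e_n^* \in B_{\ell_1}$ and check that every factor defining $f_n$ equals $1$ there, giving $\norm{f_n}_{FBL[c_0]} \geq f_n(e_n^*) = 1$. For the upper bound, however, the paper argues directly from the norm formula: it observes the pointwise estimate $f_n(x^*) \leq \abs{x^*(e_n)}$ for all $x^* \in B_{\ell_1}$, so that any admissible sum satisfies $\sum_{j=1}^k f_n(x_j^*) \leq \sum_{j=1}^k \abs{x_j^*(e_n)} \leq \sup_{x \in B_{c_0}} \sum_{j=1}^k \abs{x_j^*(x)} \leq 1$. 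You instead invoke monotonicity of the lattice norm together with positivity: $0 \leq f_n \leq \sum_{i=1}^n f_i$, hence $\norm{f_n}_{FBL[c_0]} \leq \norm{\sum_{i=1}^n f_i}_{FBL[c_0]} \leq 1$ by Lemma \ref{sumasParcialesfi}. Both routes are valid and there is no circularity, since Lemma \ref{sumasParcialesfi} precedes this statement in the paper. Your version is a one-line deduction that reuses prior work; the paper's version is self-contained and isolates the clean pointwise inequality $f_n(x^*) \leq \abs{x_n^*}$, which is the same estimate that powers the proof of Lemma \ref{sumasParcialesfi} itself (there in the form $f_{i_j}(x_j^*) \leq \abs{x_{ji_j}^*}$). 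The difference is one of economy versus transparency, not of substance.
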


\begin{proof}

We know that $$\norm{f_n}_{FBL[c_0]} = \sup \set{\sum_{j=1}^k f_n(x_j^*) : k \in \mathbb{N}, x_1^*, \ldots, x_k^* \in B_{\ell_1}, \sup_{x \in B_{c_0}}\sum_{j=1}^k|x_j^*(x)|\leq 1}.$$

Taking $e_n^* \in B_{\ell_1}$ we have that $f_n(e_n^*) = 1$, so that $\norm{f_n}_{FBL[c_0]} \geq 1$.
In general, since $f_n(x^*)\leq |x^*(e_n)|$ for every $x^* \in B_{\ell_1}$, we have that

$$\sum_{j=1}^k f_n(x_j^*) \leq \sum_{j=1}^k |x_j^*(e_n)| \leq \sup_{x \in B_{c_0}}\sum_{j=1}^k|x_j^*(x)| \leq 1 ,$$
so  $\norm{f_n}_{FBL[c_0]} = 1$.
\end{proof}

Thus, the Banach lattice $c_0$ is lattice-embeddable into the free Banach lattice $FBL[c_0]$ \cite[Theorem 4.50]{CB}. For the sake of completeness we include a proof that, indeed, this embedding has norm one:

\begin{thm}
The operator $u: c_0 \longrightarrow FBL[c_0]$ given by $u(x) = \sum_{i=1}^{+\infty}x_i f_i$ for every $x = (x_1, x_2, \ldots) \in c_0$ is a Banach lattice embedding with $\|u\|=1$.
\end{thm}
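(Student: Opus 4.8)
The plan is to show directly that $u$ is an isometry onto its image; this is stronger than being an embedding and gives $\norm{u}=1$ at once, while the lattice-homomorphism property comes for free from disjointness. I will proceed in four steps: convergence of the defining series, the lattice-homomorphism property, the upper norm estimate, and the matching lower estimate. Throughout I use that the $f_i$ are positive and pairwise disjoint, that $\norm{\sum_{i=1}^n f_i}_{FBL[c_0]}\le 1$ (Lemma \ref{sumasParcialesfi}), and that $\norm{f_i}_{FBL[c_0]}=1$.

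First I would check that the series defining $u(x)$ converges in $FBL[c_0]$ for each $x\in c_0$. By disjointness, for $m<n$ we have $\abs{\sum_{i=m+1}^{n} x_i f_i}=\sum_{i=m+1}^{n}\abs{x_i}f_i\le\big(\sup_{i>m}\abs{x_i}\big)\sum_{i=1}^{n}f_i$, so monotonicity of the lattice norm together with Lemma \ref{sumasParcialesfi} gives $\norm{\sum_{i=m+1}^{n}x_if_i}_{FBL[c_0]}\le\sup_{i>m}\abs{x_i}$. Since $x\in c_0$, this tends to $0$ as $m\to\infty$, so the partial sums are Cauchy and $u(x)$ is well defined. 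Linearity of $u$ holds on finite sums and passes to the norm limit.

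Next I would prove that $u$ is a lattice homomorphism. For finite sums, disjointness yields $\abs{\sum_{i=1}^{n}x_if_i}=\sum_{i=1}^{n}\abs{x_i}f_i$; letting $n\to\infty$ and using continuity of the lattice operations, together with the fact that $\abs{x}\in c_0$ so that $u(\abs{x})=\sum_i\abs{x_i}f_i$ converges, we obtain $\abs{u(x)}=u(\abs{x})$ for every $x\in c_0$. Since a linear operator between Banach lattices that preserves absolute values is a lattice homomorphism, this settles the homomorphism property. It then remains to compute the norm. For the upper bound, for each $n$ we have $\abs{\sum_{i=1}^{n}x_if_i}=\sum_{i=1}^{n}\abs{x_i}f_i\le\norm{x}_\infty\sum_{i=1}^{n}f_i$, whence $\norm{\sum_{i=1}^{n}x_if_i}_{FBL[c_0]}\le\norm{x}_\infty$ by Lemma \ref{sumasParcialesfi}; letting $n\to\infty$ gives $\norm{u(x)}_{FBL[c_0]}\le\norm{x}_\infty$. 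For the lower bound, fix $i$: since all terms $\abs{x_j}f_j$ are positive and the positive cone is closed, $\abs{u(x)}=u(\abs{x})=\sum_j\abs{x_j}f_j\ge\abs{x_i}f_i\ge 0$, so monotonicity of the norm and $\norm{f_i}_{FBL[c_0]}=1$ give $\norm{u(x)}_{FBL[c_0]}=\norm{\abs{u(x)}}_{FBL[c_0]}\ge\abs{x_i}$. Taking the supremum over $i$ yields $\norm{u(x)}_{FBL[c_0]}\ge\norm{x}_\infty$, so $u$ is an isometry, hence an embedding with $\norm{u}=1$.

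There is no serious obstacle, since Lemmas \ref{LemAux1}--\ref{sumasParcialesfi} already carry the analytic weight. The only point requiring care is to avoid treating $\sum_{i=1}^{\infty}f_i$ as a convergent element of $FBL[c_0]$ (it is not, as the $f_i$ are disjoint and bounded away from $0$), and instead to pass to limits through the finite partial sums $\sum_{i=1}^{n}f_i$, to which Lemma \ref{sumasParcialesfi} applies; this is exactly what the estimates above do.
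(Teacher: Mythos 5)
Your proof is correct and follows essentially the same route as the paper: disjointness turns $\left|\sum_i x_i f_i\right|$ into $\sum_i |x_i| f_i$, Lemma \ref{sumasParcialesfi} yields the upper bound $\|x\|_\infty$, and positivity together with $\|f_i\|_{FBL[c_0]}=1$ yields the matching lower bound. The only difference is one of detail, not of method: you spell out the steps the paper leaves implicit (norm-convergence of the defining series, the passage from finite sums to the limit, and that preservation of absolute values makes $u$ a lattice homomorphism), whereas the paper reduces everything to the finite-sum identity $\left\|\sum_{i=1}^n a_i f_i\right\|_{FBL[c_0]}=\max_{1\leq i\leq n}|a_i|$ and delegates the rest to density and the criterion cited from \cite{CB}.
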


\begin{proof}
It is enough to prove that 
$$ \norm{\sum_{i=1}^n a_i f_i }_{FBL[c_0]} = \max_{1 \leq i \leq n} |a_i| $$
for every $n\in \nat$ and every $a_1,a_2,\ldots,a_n \in \Real$.
Fix any $a_1,a_2,\ldots,a_n \in \Real$. Without loss of generality, suppose that $|a_1|=\max_{1 \leq i \leq n} |a_i|$. Since each function $f_i$ is positive and has norm one, we have that

\begin{eqnarray*} 
 \max_{1 \leq i \leq n} |a_i| = |a_1|=\norm{|a_1|f_1}_{FBL[c_0]} \leq  \norm{\sum_{i=1}^n |a_i| f_i}_{FBL[c_0]} & = & \norm{\bigg|\sum_{i=1}^n a_i f_i\bigg|}_{FBL[c_0]}\\
 & = & \norm{\sum_{i=1}^n a_i f_i}_{FBL[c_0]}.
\end{eqnarray*}

On the other hand, 

\begin{eqnarray*} 
\norm{\sum_{i=1}^n a_i f_i}_{FBL[c_0]} =  \norm{\sum_{i=1}^n |a_i| f_i}_{FBL[c_0]} & \leq & \norm{\sum_{i=1}^n |a_1| f_i }_{FBL[c_0]}\\
& = & |a_1| \norm{ \sum_{i=1}^n f_i }_{FBL[c_0]}\\
& \leq & |a_1| = \max_{1 \leq i \leq n} |a_i|.
\end{eqnarray*}

\end{proof}	

The Banach lattice homomorphism $T : FBL[c_0] \longrightarrow c_0$ given by the formula $T(f) = (f(e_1^*), f(e_2^*), \ldots)$ for every $f: c_0^* \longrightarrow \mathbb{R} \in FBL[c_0]$ is surjective (notice that $FLB[c_0]$ contains a natural copy of $c_0$ and $T|_{c_0}$ is the identity map), so it is a quotient map. Since $T \circ u = id_{c_0}$, we have proved the following theorem:

\begin{thm}

The Banach lattice $c_0$ is complemented in $FBL[c_0]$.

\end{thm}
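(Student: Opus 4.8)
The plan is to exhibit a bounded linear retraction for the embedding $u$, since a projection onto a complemented copy of $c_0$ is then obtained by composition. Concretely, given the isometric Banach lattice embedding $u:c_0\To FBL[c_0]$ of the previous theorem, it suffices to produce a bounded operator $T:FBL[c_0]\To c_0$ with $T\circ u=id_{c_0}$; then $P:=u\circ T$ satisfies $P^2=u\circ(T\circ u)\circ T=u\circ T=P$, so $P$ is a bounded projection of $FBL[c_0]$ onto the isometric copy $u(c_0)$ of $c_0$, which is exactly complementation.

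For the retraction I would take the natural evaluation map $T(f)=(f(e_n^*))_{n\in\nat}$, where $e_n^*\in B_{\ell_1}=B_{c_0^*}$ is the $n$-th unit vector. First I would check that $T$ is well defined, i.e. that $T(f)\in c_0$: every $f\in FBL[c_0]$ is $weak^*$-continuous on $B_{\ell_1}$ and positively homogeneous, so $f(0)=0$; since $e_n^*\to 0$ in the $weak^*$ topology $\sigma(\ell_1,c_0)$, we get $f(e_n^*)\to f(0)=0$. Next, $T$ is a Banach lattice homomorphism because each coordinate is a point evaluation, which is linear and preserves the lattice operations; and it is bounded with $\norm{T}\leq 1$, since taking a single functional $x^*=e_n^*$ in the definition of $\norm{\cdot}_{FBL[c_0]}$ shows $\abs{f(e_n^*)}\leq\norm{f}_{FBL[c_0]}$, whence $\norm{T(f)}_\infty\leq\norm{f}_{FBL[c_0]}$.

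The decisive step is the identity $T\circ u=id_{c_0}$, which reduces to the pointwise computation $f_i(e_n^*)=\delta_{in}$. When $i=n$ one has $\abs{x_n^*}=1$ and $\max\set{\abs{x_m^*}:m<n}=0$ at $x^*=e_n^*$, so the positive-part factor equals $1$, while $g_{nm}(e_n^*)=1$ for every $m>n$ because $\abs{x_m^*}=0\leq(N_m-1)$; hence $f_n(e_n^*)=1$. When $i\neq n$ the first factor vanishes: if $i<n$ then $\abs{x_i^*}=0$, and if $i>n$ then $\abs{x_i^*}=0$ while $\max\set{\abs{x_m^*}:m<i}=1$, so $(\,0-N_i\cdot 1\,)^+=0$. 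Using that the single-point evaluation $f\mapsto f(e_n^*)$ is a bounded, hence norm-continuous, functional, it passes through the convergent series defining $u(x)=\sum_i x_i f_i$, giving $(T\circ u)(x)_n=\sum_i x_i f_i(e_n^*)=x_n$. With $T\circ u=id_{c_0}$ in hand, $P=u\circ T$ is the desired projection and $c_0$ is complemented.

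The only genuinely delicate points are the interchange of $T$ with the infinite sum $\sum_i x_i f_i$, which I would justify by the norm-continuity of evaluation at $e_n^*$ on $FBL[c_0]$, and the verification that $T$ lands in $c_0$ rather than merely in $\ell_\infty$; both are routine once the $weak^*$-continuity of elements of $FBL[c_0]$ on $B_{\ell_1}$ is invoked. The computation $f_i(e_n^*)=\delta_{in}$ is elementary but is the conceptual heart, as it is precisely what forces the coordinates of $u(x)$ to reproduce those of $x$.
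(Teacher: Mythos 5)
Your proposal is correct and follows essentially the same route as the paper: the paper also defines $T(f)=(f(e_1^*),f(e_2^*),\ldots)$, notes it is a Banach lattice homomorphism onto $c_0$, and concludes complementation from $T\circ u=id_{c_0}$. You merely make explicit some details the paper leaves implicit (the verification $f_i(e_n^*)=\delta_{in}$, the weak$^*$-continuity argument that $T$ lands in $c_0$, and the projection $P=u\circ T$), all of which are sound.
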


As a consequence of this fact, that $\norm{u}=1$ and that $c_0$ is not projective, applying again Proposition \ref{quotientofprojective}, we conclude that:

\begin{thm}

$FBL[c_0]$ is not projective.

\end{thm}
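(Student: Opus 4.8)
The plan is to derive the non-projectivity of $FBL[c_0]$ as a formal consequence of three ingredients already established: $c_0$ is not projective, $c_0$ is complemented in $FBL[c_0]$ via the norm-one lattice embedding $u$ with left inverse the quotient $T$, and the characterization of projectivity of quotients given by Proposition \ref{quotientofprojective}. The key observation is that $FBL[c_0]$ \emph{is} a projective Banach lattice (being a free Banach lattice, as noted in the introduction following \cite{dPW15}), so if it were also true that $FBL[c_0]$ were projective we would have two projective objects and a retraction between them; the contradiction must instead come from the fact that a retract of a projective lattice is projective, while $c_0$ is known not to be.

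First I would record that $FBL[c_0]$ is projective: this follows because free Banach lattices generated by a Banach space are projective (this is part of why free and projective objects are linked, as discussed in the introduction). Next I would argue by contradiction: assume $FBL[c_0]$ is projective. The map $u: c_0 \to FBL[c_0]$ is a Banach lattice homomorphism with $\norm{u}=1$, and $T: FBL[c_0] \to c_0$ is a lattice quotient map with $T\circ u = id_{c_0}$. The cleanest route is to view $c_0$ as a lattice quotient of the projective lattice $FBL[c_0]$: indeed $T$ realizes $c_0$ as $FBL[c_0]/\ker T$, where $\ker T$ is a closed ideal. Then Proposition \ref{quotientofprojective}, applied with $P = FBL[c_0]$ and $\mathcal{I} = \ker T$, says that the quotient $c_0 = FBL[c_0]/\ker T$ is projective if and only if for every $\varepsilon > 0$ there is a Banach lattice homomorphism $u_\varepsilon : c_0 \to FBL[c_0]$ with $T\circ u_\varepsilon = id_{c_0}$ and $\norm{u_\varepsilon} \leq 1+\varepsilon$.

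But such a homomorphism exists outright, independently of $\varepsilon$: the map $u$ itself satisfies $T\circ u = id_{c_0}$ and $\norm{u}=1 \leq 1+\varepsilon$. Hence the right-hand side of the equivalence in Proposition \ref{quotientofprojective} holds, so $c_0$ would be projective. This contradicts the Theorem of Section 2 asserting that $c_0$ is not projective, completing the argument. In writing this up I would be careful to state explicitly that $\ker T$ is a closed ideal and that $T$ is genuinely the quotient map onto $FBL[c_0]/\ker T$ (which is lattice-isometric to $c_0$ since $T$ is a surjective lattice homomorphism and $u$ provides a norm-one section), so that the hypotheses of Proposition \ref{quotientofprojective} are literally met.

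The step I expect to be the only real subtlety is the verification that the hypotheses of Proposition \ref{quotientofprojective} apply to $T$: one must confirm that $FBL[c_0]$ is projective and that $c_0$ is (isometrically) the quotient $FBL[c_0]/\ker T$ rather than merely a complemented subspace. Once the retraction $T\circ u = id_{c_0}$ with $\norm{u}=1$ is framed as the section required by the proposition, the contradiction with the non-projectivity of $c_0$ is immediate, so no delicate estimates are needed beyond those already carried out in the previous sections.
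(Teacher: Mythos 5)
Your actual deduction (the second and third paragraphs of your proposal) is precisely the paper's proof: assume $FBL[c_0]$ is projective, apply Proposition \ref{quotientofprojective} with $P = FBL[c_0]$, $\mathcal{I} = \ker T$ and $\pi = T$, observe that $u$ is a Banach lattice homomorphism with $T \circ u = id_{c_0}$ and $\norm{u} = 1 \leq 1 + \varepsilon$, conclude that $c_0$ would then be projective, and contradict the theorem of Section 2. Your added remark that one should check $c_0$ is lattice-isometric to $FBL[c_0]/\ker T$ is a legitimate point the paper glosses over; it holds because $\norm{T} \leq 1$ (each singleton $\set{e_n^*}$ is an admissible family in the definition of $\norm{\cdot}_{FBL[c_0]}$) together with the norm-one section $u$, so the identification is indeed isometric and the hypotheses of the proposition are literally met.

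However, your first paragraph contains a false claim that must be deleted: that one can "record that $FBL[c_0]$ is projective" because "free Banach lattices generated by a Banach space are projective." No such result exists, and it is exactly what the theorem under discussion disproves: $FBL[c_0]$ is a free Banach lattice over a Banach space and it is \emph{not} projective. What de Pagter and Wickstead prove is projectivity of $FBL(A)$, the free Banach lattice generated by a \emph{set} $A$ (the two notions coincide only in special cases, e.g.\ $FBL[\ell_1(A)]$ is lattice isometric to $FBL(A)$, which is why $FBL(L)$ could be used as the projective object in Section 2). If that "recorded fact" were kept alongside the rest of your argument, your text would be deriving a contradiction from premises you assert to be true --- i.e.\ it would be incoherent --- rather than carrying out a proof by contradiction in which the projectivity of $FBL[c_0]$ is only a hypothesis to be refuted. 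Strike the first paragraph entirely; what remains is correct and coincides with the paper's argument.
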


\end{document}